 \newtheorem{thm}{Theorem}[section]
 \newtheorem{lem}[thm]{Lemma}
 \theoremstyle{definition}
 \newtheorem{defn}[thm]{Definition}
 \theoremstyle{remark}
 \newtheorem{rem}[thm]{Remark}
 \numberwithin{equation}{section}
\begin{document}

%
%
%
%
%
%
%
%
%

\title{A fixed point theorem for Kannan-type maps in metric spaces}

\author{Mitropam Chakraborty}

\address{%
Department of Mathematics\\
Visva-Bharati\\
Santiniketan 731235\\
India}

\email{mitropam@gmail.com}

\thanks{The first author is indebted to the \textbf{UGC} (University Grants
Commissions), India for awarding him a JRF (Junior Research
Fellowship) during the tenure in which this paper was written.}

\author{S. K. Samanta}

\address{%
Department of Mathematics\\
Visva-Bharati\\
Santiniketan 731235\\
India}

\email{syamal\_123@yahoo.co.in}

\subjclass{Primary 47H10; Secondary 47H09}

\keywords{Kannan map, fixed point}

\date{November 19, 2012}

\begin{abstract}
We prove a generalization of Kannan's fixed point theorem, based on
a recent result of Vittorino Pata.
\end{abstract}

\maketitle
\section{Introduction}

Our starting point is Kannan's result in metric fixed point theory involving contractive type mappings which are not necessarily continuous
\cite{key-1}. It has been shown in \cite{key-6} that Kannan's theorem is independent
of the famous Banach contraction principle (see, e.g. \cite{key-2}), and that
it also characterizes the metric completeness concept \cite{key-3}.
\begin{defn}
Let $(X,\, d)$ be a metric space. Let us call $T:\, X\rightarrow X$
a \textit{Kannan map} if there exists some $\lambda\in[0,\,1)$ such
that 
\begin{equation}
d(Tx,\, Ty)\leq\frac{\lambda}{2}\left\{ d(x,\, Tx)+d(y,\, Ty)\right\} \label{eq:a}
\end{equation}
for all $x,\, y\in X$.
\end{defn}
For complete metric spaces, Kannan proved the following:
\begin{thm}
(\cite{key-1})\label{thm:If--is} If $(X,\, d)$ is a complete metric
space, and if $T$ is a Kannan map on $X$, then there exists a unique
$x\in X$ such that $Tx=x$. 
\end{thm}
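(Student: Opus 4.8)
The plan is to prove this by the standard Picard iteration argument, adapted to the Kannan condition. Fix any $x_0 \in X$ and define the sequence $x_{n+1} = T x_n$. The first step is to show that consecutive terms contract geometrically: applying \eqref{eq:a} with $x = x_{n-1}$ and $y = x_n$ gives
\[
d(x_n,\, x_{n+1}) = d(Tx_{n-1},\, Tx_n) \leq \frac{\lambda}{2}\left\{ d(x_{n-1},\, x_n) + d(x_n,\, x_{n+1}) \right\},
\]
and solving this inequality for $d(x_n,\, x_{n+1})$ yields $d(x_n,\, x_{n+1}) \leq k\, d(x_{n-1},\, x_n)$ where $k = \lambda/(2-\lambda) < 1$ because $\lambda \in [0,\,1)$. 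Iterating, $d(x_n,\, x_{n+1}) \leq k^n d(x_0,\, x_1)$.

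Next I would show $(x_n)$ is Cauchy: for $m > n$, the triangle inequality and the geometric bound give $d(x_n,\, x_m) \leq \sum_{j=n}^{m-1} k^j d(x_0,\, x_1) \leq \frac{k^n}{1-k} d(x_0,\, x_1) \to 0$, so the sequence is Cauchy. By completeness of $(X,\, d)$ it converges to some $z \in X$. To see that $z$ is a fixed point — and this is the step where continuity of $T$ is \emph{not} available, so a little care is needed — I would estimate $d(z,\, Tz)$ directly: by the triangle inequality $d(z,\, Tz) \leq d(z,\, x_{n+1}) + d(Tx_n,\, Tz)$, and \eqref{eq:a} bounds the last term by $\frac{\lambda}{2}\{d(x_n,\, x_{n+1}) + d(z,\, Tz)\}$; rearranging gives
\[
d(z,\, Tz) \leq \frac{1}{1 - \lambda/2}\left( d(z,\, x_{n+1}) + \frac{\lambda}{2} d(x_n,\, x_{n+1}) \right),
\]
and letting $n \to \infty$ forces $d(z,\, Tz) = 0$, i.e. $Tz = z$.

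Finally, for uniqueness, suppose $Tz = z$ and $Tw = w$. Then \eqref{eq:a} gives $d(z,\, w) = d(Tz,\, Tw) \leq \frac{\lambda}{2}\{d(z,\, Tz) + d(w,\, Tw)\} = \frac{\lambda}{2}\{0 + 0\} = 0$, so $z = w$. The main obstacle, such as it is, is the fixed-point step: because Kannan maps need not be continuous one cannot simply pass to the limit in $x_{n+1} = Tx_n$, and must instead exploit the contractive inequality itself to control $d(z,\, Tz)$ as above. Everything else is routine, and this same scheme is what I expect the generalization based on Pata's result to refine.
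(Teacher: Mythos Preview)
Your proof is correct and is essentially the classical argument for Kannan's theorem. Note, however, that the paper does not supply its own proof of this statement: Theorem~\ref{thm:If--is} is merely quoted from Kannan~\cite{key-1} as background. The paper's only route to Kannan's result is indirect---it proves the Pata-type generalization (Theorem~\ref{thm:If-the-inequality}) and then, in Section~3, shows via Bernoulli's inequality that the Kannan condition~\eqref{eq:a} implies the generalized condition~\eqref{eq:b} with $\psi(\varepsilon)=\varepsilon^{\gamma}$, $\alpha=\beta=1$, $\Lambda=\lambda/2$. So your direct Picard-iteration argument and the paper's detour through~\eqref{eq:b} are genuinely different paths: yours is shorter and self-contained for Kannan maps specifically, while the paper's point is precisely that the same conclusion follows from the weaker hypothesis~\eqref{eq:b}, of which Kannan's inequality is a strict special case.
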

And Subrahmanyam (in \cite{key-3}) has proved the counterpart by showing
that if all the Kannan maps on a metric space have fixed points then
that space must necessarily be complete.

\section{Generalization of Kannan's fixed point theorem}

As in \cite{key-4}, from this point onwards let $(X,\, d)$ stand
for a complete metric space. Let us select arbitrarily a point $x_{0}\in X$,
and call it the "zero" of $X$. We denote
\begin{equation*}
\left\Vert x\right\Vert :=d(x,\, x_{0}), \forall x\in X.
\end{equation*}
Let $\Lambda\geq0,\,\alpha\geq1,\textrm{ and }\beta\in[0,\,\alpha]$
be fixed constants, and let $\psi:\,[0,\,1]\rightarrow[0,\,\infty)$
denote a preassigned increasing function that vanishes (with continuity)
at zero. Then, for a map $T:\, X\rightarrow X$, Pata shows
that the following theorem holds.
\begin{thm}
(\cite{key-4})\label{thm:Pata} If the inequality
\begin{equation}
d(Tx,\, Ty)\leq(1-\varepsilon)d(x,\, y)+\Lambda\varepsilon^{\alpha}\psi(\varepsilon)[1+\left\Vert x\right\Vert +\left\Vert y\right\Vert ]^{\beta}\label{eq:w}
\end{equation}
is satisfied for every $\varepsilon\in[0,\,1]$ and every $x,\, y\in X,$
then $T$ possesses a unique fixed point $x_{*}=Tx_{*}$ ($x_{*}\in X$). 
\end{thm}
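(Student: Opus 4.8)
The plan is to run the Picard iteration from the ``zero'' $x_0$ and to imitate the classical Banach/Kannan argument, the one new ingredient being that the free parameter $\varepsilon$ must be chosen adaptively at each stage so as to exploit the decay of $\psi$ at $0$. Write $x_n=T^nx_0$ and $c_n=\|x_n\|=d(x_n,x_0)$. We may assume $\Lambda>0$ and $d(Tx_0,x_0)=c_1>0$: if $c_1=0$ then $x_0$ is already fixed, and if $\Lambda=0$ then $\varepsilon=1$ in \eqref{eq:w} makes $T$ constant, so these degenerate cases are trivial.

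\textbf{Step 1: $\{c_n\}$ is bounded.} This is the step I expect to carry essentially all of the difficulty. Putting $y=x_0$ in \eqref{eq:w}, using $\|x_0\|=0$ and the triangle inequality $c_{n+1}\le d(x_{n+1},x_1)+c_1$, one obtains
\[
c_{n+1}\le(1-\varepsilon)c_n+c_1+\Lambda\varepsilon^\alpha\psi(\varepsilon)(1+c_n)^\beta\qquad(\varepsilon\in[0,1]).
\]
Taking $\varepsilon=0$ already gives the crude estimate $c_{n+1}\le c_n+c_1$. To tame large values I would instead take $\varepsilon=2c_1/c_n$, admissible as soon as $c_n\ge 2c_1$: since $\beta\le\alpha$, $\alpha\ge 1$ and $c_n\ge 1$, the factor $\varepsilon^\alpha(1+c_n)^\beta$ stays bounded by a constant depending only on $c_1,\alpha,\beta$, whereas $\psi(2c_1/c_n)\to 0$ as $c_n\to\infty$ by continuity of $\psi$ at $0$. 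Hence there is a threshold $M_0$ such that $c_n\ge M_0$ forces $c_{n+1}\le c_n$; combined with $c_{n+1}\le c_n+c_1$ this yields $c_n\le M_0+c_1=:M$ for all $n$. The subtle point is precisely this coupling: the admissible values of $\varepsilon$ shrink exactly when $c_n$ grows, and this is the only mechanism by which the factor $\psi(\varepsilon)$ can be made small enough to absorb the growth coming from $(1+c_n)^\beta$.

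\textbf{Step 2: $\{x_n\}$ is Cauchy.} With $x=x_n$, $y=x_m$ in \eqref{eq:w} and the uniform bound from Step 1,
\[
d(x_{n+1},x_{m+1})\le(1-\varepsilon)d(x_n,x_m)+K\varepsilon^\alpha\psi(\varepsilon),\qquad K:=\Lambda(1+2M)^\beta .
\]
Put $\rho=\limsup_{n,m\to\infty}d(x_n,x_m)$, which is finite since $d(x_n,x_m)\le 2M$. As the left-hand side has the same $\limsup$ as $d(x_n,x_m)$, passing to the limit yields $\varepsilon\rho\le K\varepsilon^\alpha\psi(\varepsilon)$, hence (using $\alpha\ge 1$) $\rho\le K\psi(\varepsilon)$ for every $\varepsilon\in(0,1]$; letting $\varepsilon\to 0$ gives $\rho=0$, so $\{x_n\}$ is Cauchy.

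\textbf{Step 3: existence and uniqueness.} By completeness $x_n\to x_*$ for some $x_*\in X$. Applying \eqref{eq:w} with $x=x_n$, $y=x_*$ inside $d(x_*,Tx_*)\le d(x_*,x_{n+1})+d(Tx_n,Tx_*)$ and letting $n\to\infty$ (so $d(x_*,x_{n+1})\to 0$, $d(x_n,x_*)\to 0$ and $\|x_n\|\to\|x_*\|$) leaves $d(x_*,Tx_*)\le\Lambda\psi(\varepsilon)(1+2\|x_*\|)^\beta$ for every $\varepsilon$; letting $\varepsilon\to 0$ gives $Tx_*=x_*$. Finally, if also $Ty_*=y_*$, then \eqref{eq:w} gives $\varepsilon\,d(x_*,y_*)\le\Lambda\varepsilon^\alpha\psi(\varepsilon)(1+\|x_*\|+\|y_*\|)^\beta$, and $\varepsilon\to 0$ forces $d(x_*,y_*)=0$, proving uniqueness.
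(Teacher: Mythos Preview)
The paper does not actually prove Theorem~\ref{thm:Pata}; it merely quotes it from Pata~\cite{key-4} as background for the Kannan-type analogue (Theorem~\ref{thm:If-the-inequality}), which is the paper's own contribution. So there is no in-paper proof to compare against.

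That said, your argument is correct and is essentially Pata's original one: the adaptive choice $\varepsilon\sim c_1/c_n$ in Step~1 is exactly the mechanism Pata uses to show boundedness of the orbit, and the hypothesis $\beta\le\alpha$ is used precisely where you use it, to keep $\varepsilon^\alpha(1+c_n)^\beta$ bounded. Your Step~2 via the double $\limsup$ is a clean variant of the standard argument and works because the index shift $(n,m)\mapsto(n+1,m+1)$ leaves the $\limsup$ unchanged. Step~3 is routine.

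It is worth contrasting your Step~1 with the paper's proof of Lemma~\ref{lem:-is-bounded.} for the Kannan-type inequality~\eqref{eq:b}: there the boundedness of $\{c_n\}$ follows from the case $\varepsilon=0$ alone (giving $c_n\le 3c_1$ directly), with no need for an adaptive $\varepsilon$ or the restriction $\beta\le\alpha$. This is why the paper can allow an arbitrary $\beta\ge 0$ in Theorem~\ref{thm:If-the-inequality}, whereas Pata's theorem genuinely requires $\beta\in[0,\alpha]$ --- a point your proof illustrates well.
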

Motivated by this generalization of the Banach fixed point theorem,
we can come up with an analogous generalized form of Theorem \ref{thm:If--is}.

\subsection{The main theorem}

With all the other conditions and notations remaining the same except for a more general
$\beta\geq0$, our goal is to prove the following:
\begin{thm}
\label{thm:If-the-inequality}If the inequality 
\begin{multline}
d(Tx,\, Ty)\\
\leq\frac{1-\varepsilon}{2}\left\{ d(x,\, Tx)+d(y,\, Ty)\right\} +\Lambda\varepsilon^{\alpha}\psi(\varepsilon)\left[1+\left\Vert x\right\Vert +\left\Vert Tx\right\Vert +\left\Vert y\right\Vert +\left\Vert Ty\right\Vert \right]^{\beta}\label{eq:b}
\end{multline}
is satisfied $\forall\varepsilon\in[0,\,1]$ and $\forall x,\, y\in X$,
then $T$ possesses a unique fixed point 
\[
x^{*}=Tx^{*}\,(x^{*}\in X).
\]
\end{thm}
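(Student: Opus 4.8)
The plan is to follow the architecture of Pata's proof of Theorem~\ref{thm:Pata}, adapted to the Kannan-type estimate. The key observation is that setting $\varepsilon=0$ in \eqref{eq:b} is legitimate — the perturbation term vanishes there, since $\alpha\ge1$ and $\psi(0)=0$ — and reduces \eqref{eq:b} to the plain Kannan estimate
\[
d(Tx,Ty)\le\frac12\{d(x,Tx)+d(y,Ty)\}.
\]
Fix $x_0$ as the ``zero'' of $X$ and set $x_n:=T^nx_0$, $d_n:=d(x_n,x_{n+1})$, $c_n:=\|x_n\|$. Applying the displayed estimate to the pair $(x_{n-1},x_n)$ gives $d_n\le\frac12(d_{n-1}+d_n)$, hence $d_n\le d_{n-1}$, so $(d_n)$ is non-increasing; applying it to $(x_n,x_0)$ gives $d(x_{n+1},x_1)\le\frac12(d_n+d_0)\le d_0$, whence $c_{n+1}\le d(x_{n+1},x_1)+c_1\le d_0+c_1$. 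Since also $c_0=0$, the orbit $\{x_n\}$ is bounded: $c_n\le d_0+c_1$ for every $n$. This is precisely the step where the Banach analogue needs the constraint $\beta\le\alpha$; here it costs nothing, which is why \eqref{eq:b} may be assumed with arbitrary $\beta\ge0$.

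Next I would upgrade ``$(d_n)$ non-increasing'' to $d_n\to0$; this is the only place where the full force of \eqref{eq:b} (with $\varepsilon>0$) is used. Put $L:=1+4(d_0+c_1)\ge1$, so that by the orbit bound the bracketed factor in \eqref{eq:b} evaluated at $(x_{n-1},x_n)$ is at most $L^\beta$. Then
\[
d_n\le\frac{1-\varepsilon}{2}(d_{n-1}+d_n)+\Lambda\varepsilon^\alpha\psi(\varepsilon)L^\beta,
\]
equivalently $d_n\le\frac{1-\varepsilon}{1+\varepsilon}\,d_{n-1}+\frac{2\Lambda\varepsilon^\alpha\psi(\varepsilon)L^\beta}{1+\varepsilon}$, for every $\varepsilon\in(0,1]$. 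Since $(d_n)$ converges, say to $d\ge0$, letting $n\to\infty$ and simplifying gives $d\le\Lambda\varepsilon^{\alpha-1}\psi(\varepsilon)L^\beta$; as $\alpha\ge1$, the right-hand side tends to $0$ when $\varepsilon\to0^+$, so $d=0$.

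The remaining steps are the standard Kannan ones. For Cauchyness: if $\{x_n\}$ were not Cauchy there would be $\delta>0$ and indices $n_k\to\infty$ and $m_k>n_k$ with $m_k$ minimal such that $d(x_{m_k},x_{n_k})\ge\delta$; then $\delta\le d(x_{m_k},x_{n_k})\le d_{m_k-1}+d(x_{m_k-1},x_{n_k})<d_{m_k-1}+\delta$ forces $d(x_{m_k},x_{n_k})\to\delta$, while the $\varepsilon=0$ estimate applied to $(x_{m_k-1},x_{n_k-1})$ gives $d(x_{m_k},x_{n_k})\le\frac12(d_{m_k-1}+d_{n_k-1})\to0$ — a contradiction. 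By completeness $x_n\to x^*$ for some $x^*\in X$, and the $\varepsilon=0$ estimate at $(x_n,x^*)$ gives $d(x^*,Tx^*)\le 2d(x^*,x_{n+1})+d_n\to0$, so $Tx^*=x^*$. For uniqueness, if $Ty^*=y^*$ as well, then \eqref{eq:b} at the pair $(x^*,y^*)$ reads $d(x^*,y^*)\le\Lambda\varepsilon^\alpha\psi(\varepsilon)[1+2\|x^*\|+2\|y^*\|]^\beta$, and letting $\varepsilon\to0^+$ forces $x^*=y^*$.

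The one non-mechanical ingredient is the second paragraph, where the $\varepsilon$-perturbation is exploited to push $d_n$ down to $0$; the points demanding care are the order of the two limits ($n\to\infty$ before $\varepsilon\to0^+$) and the use of the already-established bound $c_n\le d_0+c_1$ to control the bracketed factor in \eqref{eq:b}. I do not anticipate any serious obstacle beyond this bookkeeping.
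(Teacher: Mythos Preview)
Your proof is correct and follows the same architecture as the paper's: bound the orbit via the $\varepsilon=0$ estimate, use $\varepsilon>0$ to force $d_n\to0$, deduce Cauchyness from the $\varepsilon=0$ Kannan inequality, and finish with the standard fixed-point and uniqueness arguments. The only tactical differences are cosmetic --- you pass to the limit in the recursion for $d_n$ using monotone convergence where the paper iterates explicitly, and your Cauchy step is phrased as a subsequence contradiction rather than the paper's direct estimate $d(x_n,x_{n+p})\le\frac12(d_{n-1}+d_{n+p-1})\to0$, which is in fact simpler.
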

\begin{rem}
Since we can always redefine $\Lambda$ to keep (\ref{eq:b}) valid
no matter what initial $x_{0}\in X$ we choose, we are in no way restricting
ourselves by taking that zero instead of a generic $x\in X$
\cite{key-4}. 
\end{rem}

\subsection{Proofs}

\subsubsection{Uniqueness of $x^{*}$}
We claim first that such an $x^{*}$, if it exists, is unique.
To see that this is the case, let, if possible, $\exists x^{*},\, y^{*}\in X$ such that
\begin{align*}
&x^{*}=Tx^{*},\\ 
&y^{*}=Ty^{*},\\
\textrm{and } &x^{*}\neq y^{*}.
\end{align*}
Then (\ref{eq:b}) implies, $\forall\varepsilon\in[0,\,1],$ 
\[
d(x^{*},\, y^{*})\leq\Lambda\varepsilon^{\alpha}\psi(\varepsilon)\left[1+\left\Vert x^{*}\right\Vert +\left\Vert Tx^{*}\right\Vert +\left\Vert y^{*}\right\Vert +\left\Vert Ty^{*}\right\Vert \right]^{\beta}.
\]
In particular, $\varepsilon=0$ gives us 
\begin{align*}
d(x^{*},\, y^{*}) & \leq0\\
\implies x^{*} & =y^{*},
\end{align*}
which is a contradiction.

\subsubsection{Existence of $x^{*}$}

We now bring into play the two sequences 
\begin{align*}
 & x_{n}=Tx_{n-1}=T^{n}x_{0}\\
\textrm{and } & c_{n}=\left\Vert x_{n}\right\Vert, n=1,\,2,\,3,\,\ldots.
\end{align*}
But before we proceed any further, we will need the following:
\begin{lem}
\label{lem:-is-bounded.}$\{c_{n}\}$ is bounded.\end{lem}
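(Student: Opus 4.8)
The plan is to squeeze everything out of the single special case $\varepsilon=0$ of (\ref{eq:b}): since $\alpha\geq1$ we have $0^{\alpha}=0$, so for $\varepsilon=0$ the inequality collapses to the plain Kannan-type estimate $d(Tx,Ty)\leq\frac{1}{2}\{d(x,Tx)+d(y,Ty)\}$, valid for all $x,y\in X$. (The full force of (\ref{eq:b}) with $\varepsilon>0$ is not needed for boundedness; it will be used afterwards, when proving that $\{x_{n}\}$ is Cauchy.)

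First I would control the consecutive distances $d(x_{n},x_{n+1})$. Applying the $\varepsilon=0$ estimate with $x=x_{n-1}$ and $y=x_{n}$ gives
\[
d(x_{n},x_{n+1})=d(Tx_{n-1},Tx_{n})\leq\frac{1}{2}\{d(x_{n-1},x_{n})+d(x_{n},x_{n+1})\},
\]
whence $d(x_{n},x_{n+1})\leq d(x_{n-1},x_{n})$. Thus $\{d(x_{n},x_{n+1})\}$ is non-increasing, and in particular $d(x_{n},x_{n+1})\leq d(x_{0},x_{1})=c_{1}$ for every $n\geq0$.

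Next I would bound $c_{n}$ itself, pivoting through $x_{1}=Tx_{0}$. Applying the $\varepsilon=0$ estimate with $x=x_{n}$ and $y=x_{0}$ gives
\[
d(x_{n+1},x_{1})=d(Tx_{n},Tx_{0})\leq\frac{1}{2}\{d(x_{n},x_{n+1})+d(x_{0},x_{1})\}\leq\frac{1}{2}\{c_{1}+c_{1}\}=c_{1},
\]
by the previous step. One application of the triangle inequality then yields, for every $n\geq0$,
\[
c_{n+1}=d(x_{n+1},x_{0})\leq d(x_{n+1},x_{1})+d(x_{1},x_{0})\leq c_{1}+c_{1}=2c_{1}.
\]
Hence $c_{n}\leq2c_{1}=2\,d(x_{0},Tx_{0})$ for all $n\geq1$, so $\{c_{n}\}$ is bounded.

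I do not expect a genuine obstacle here; the delicate part has evaporated. This is exactly the place where the Kannan-type averaging term behaves more gently than Pata's hypothesis: admitting $\varepsilon=0$ already makes the consecutive distances non-increasing, so boundedness of $\{c_{n}\}$ drops out at once, with no need for a running-maximum subsequence or any asymptotic comparison, and with no restriction on $\beta$ (the argument is identical for every $\beta\geq0$, which is consistent with the relaxation announced before the theorem). The only points needing a moment's care are that $\varepsilon=0$ is a legitimate substitution in (\ref{eq:b}) — it is, precisely as was already used in the uniqueness argument, because $\alpha\geq1$ annihilates the $\varepsilon^{\alpha}$ term — and that each triangle inequality is applied in the correct direction.
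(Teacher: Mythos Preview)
Your proof is correct and follows essentially the same approach as the paper: set $\varepsilon=0$ to obtain the bare Kannan estimate, deduce that consecutive distances are non-increasing and bounded by $c_{1}$, then apply the estimate to the pair $(x_{n},x_{0})$ and finish with the triangle inequality through $x_{1}$. Your route is in fact slightly tidier, yielding $c_{n}\leq 2c_{1}$ directly, whereas the paper inserts an extra triangle-inequality detour through $x_{n+1}$ and ends with $c_{n}\leq 3c_{1}$.
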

\begin{proof}
From (\ref{eq:b}), considering again the case of $\varepsilon=0,$
we see that for $n=1,\,2,\,3,\,\ldots$, 
\begin{align}
d(x_{n+1},\, x_{n}) & =d(Tx_{n},\, Tx_{n-1})\nonumber \\
 & \leq\frac{1}{2}\left\{ d(x_{n+1},\, x_{n})+d(x_{n},\, x_{n-1})\right\} \nonumber \\
\implies d(x_{n+1},\, x_{n}) & \leq d(x_{n},\, x_{n-1})\nonumber \\
 & \vdots\nonumber \\
 & \leq d(x_{1},\, x_{0})=c_{1}.\label{eq:c}
\end{align}

Now, $\forall n\in\mathbb{N}$,
\begin{align*}
c_{n} & =d(x_{n},\, x_{0})\\
 & \leq d(x_{n},\, x_{1})+d(x_{1},\, x_{0}) & \textrm{}\\
 & =d(x_{n},\, x_{1})+c_{1} & \textrm{}\\
 & \leq d(x_{n},\, x_{n+1})+d(x_{n+1},\, x_{1})+c_{1} & \textrm{}\\
 & \leq c_{1}+d(x_{n+1},\, x_{1})+c_{1} & \textrm{[using (\ref{eq:c})]}\\
 & \leq d(Tx_{n},\, Tx_{0})+2c_{1}\\
 & \leq\frac{1}{2}\left\{ d(x_{n+1},\, x_{n})+d(x_{1},\, x_{0})\right\} +2c_{1} & \textrm{[using (\ref{eq:b}) with }\varepsilon=0\,\textrm{]}\\
 & \leq\frac{1}{2}(c_{1}+c_{1})+2c_{1} & \textrm{[(\ref{eq:c})]}\\
 & =3c_{1}. 
\end{align*}
And hence the lemma is proved.
\end{proof}
Next we strive to show that:
\begin{lem}
\label{lem:-is-Cauchy.}$\{x_{n}\}$ is Cauchy.\end{lem}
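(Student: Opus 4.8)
The plan is to follow the now-standard Pata-type argument, adapted to the Kannan setting. First I would introduce the abbreviation $d_n = d(x_{n+1}, x_n)$ and recall from \eqref{eq:c} that $\{d_n\}$ is non-increasing and bounded above by $c_1$; hence $d_n \to \ell$ for some $\ell \geq 0$. The goal of the Cauchy proof splits naturally into two parts: showing $\ell = 0$, and then using a telescoping estimate to control $d(x_n, x_m)$ for large $n, m$.

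For the first part I would apply \eqref{eq:b} with $x = x_n$, $y = x_{n-1}$ and a general $\varepsilon \in [0,1]$. Using Lemma \ref{lem:-is-bounded.} to bound the bracket $[1 + \|x_n\| + \|x_{n+1}\| + \|x_{n-1}\| + \|x_n\|]^\beta$ by a constant $K^\beta$ (with $K = 1 + 4\cdot 3c_1$, say), this gives
\[
d_n \leq \frac{1-\varepsilon}{2}(d_n + d_{n-1}) + \Lambda K^\beta \varepsilon^\alpha \psi(\varepsilon).
\]
Letting $n \to \infty$ yields $\ell \leq (1-\varepsilon)\ell + \Lambda K^\beta \varepsilon^\alpha \psi(\varepsilon)$, i.e. $\varepsilon \ell \leq \Lambda K^\beta \varepsilon^\alpha \psi(\varepsilon)$, so $\ell \leq \Lambda K^\beta \varepsilon^{\alpha-1}\psi(\varepsilon)$ for every $\varepsilon \in (0,1]$. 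Since $\alpha \geq 1$ and $\psi(\varepsilon) \to 0$ as $\varepsilon \to 0^+$, the right-hand side tends to $0$, forcing $\ell = 0$; that is, $d(x_{n+1}, x_n) \to 0$.

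For the second part — the actual Cauchy property — I would argue by contradiction in the usual way: if $\{x_n\}$ is not Cauchy, there is $\delta > 0$ and subsequences so that $d(x_{m_k}, x_{n_k}) \geq \delta$ with $n_k > m_k \geq k$ and $n_k$ chosen minimal, whence also $d(x_{m_k}, x_{n_k - 1}) < \delta$ and, using $d_n \to 0$, one gets $d(x_{m_k}, x_{n_k}) \to \delta$ (and similarly for the shifted indices $d(x_{m_k + 1}, x_{n_k + 1}) \to \delta$). Now apply \eqref{eq:b} with $x = x_{m_k}$, $y = x_{n_k}$ and a fixed $\varepsilon$:
\[
d(x_{m_k+1}, x_{n_k+1}) \leq \frac{1-\varepsilon}{2}\bigl(d_{m_k} + d_{n_k}\bigr) + \Lambda K^\beta \varepsilon^\alpha \psi(\varepsilon),
\]
and pass to the limit to obtain $\delta \leq 0 + \Lambda K^\beta \varepsilon^\alpha \psi(\varepsilon)$, hence $\delta \leq \Lambda K^\beta \varepsilon^{\alpha}\psi(\varepsilon)$ for all $\varepsilon$; letting $\varepsilon \to 0$ gives $\delta \leq 0$, a contradiction. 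Therefore $\{x_n\}$ is Cauchy.

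I expect the main obstacle to be purely bookkeeping rather than conceptual: getting the contradiction-subsequence estimates clean, in particular verifying that both $d(x_{m_k}, x_{n_k}) \to \delta$ and $d(x_{m_k+1}, x_{n_k+1}) \to \delta$ using only the triangle inequality together with $d_n \to 0$, and making sure the boundedness constant $K$ from Lemma \ref{lem:-is-bounded.} genuinely dominates every bracket term $[1 + \|x_{m_k}\| + \|x_{m_k+1}\| + \|x_{n_k}\| + \|x_{n_k+1}\|]^\beta$ uniformly in $k$ (which it does, since all $c_n \leq 3c_1$). Once $\{x_n\}$ is Cauchy, completeness of $X$ gives a limit $x^*$, and a final short application of \eqref{eq:b} with $\varepsilon = 0$ (or a limiting argument) will identify $x^*$ as the fixed point — but that belongs to the step after this lemma.
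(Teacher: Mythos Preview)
Your argument is correct, but both halves differ in method from the paper's proof, and the trade-offs are worth noting.

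For the first part (showing $d_n \to 0$), you exploit the monotonicity of $\{d_n\}$ established in \eqref{eq:c}: since $d_n \downarrow \ell$, passing to the limit in the one-step inequality immediately forces $\ell = 0$. The paper instead rearranges to $d_n \le \frac{1-\varepsilon}{1+\varepsilon}\,d_{n-1} + \frac{2C\varepsilon^\alpha}{1+\varepsilon}\psi(\varepsilon)$ and iterates this recursion explicitly, summing the resulting geometric series. Your route is cleaner here and avoids the bookkeeping of the iterated bound.

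For the second part (the Cauchy property), the situation is reversed. You run the standard contradiction-and-subsequence machinery familiar from Banach-type arguments, which works but is heavier than necessary. The paper simply observes that the Kannan structure of \eqref{eq:b} already gives, for \emph{any} $n,p$,
\[
d(x_n,x_{n+p}) = d(Tx_{n-1},Tx_{n+p-1}) \le \tfrac{1-\varepsilon}{2}\bigl(d_{n-1} + d_{n+p-1}\bigr) + C\varepsilon^\alpha\psi(\varepsilon),
\]
and the right-hand side tends to $0$ uniformly in $p$ once $d_n \to 0$ and $\varepsilon \to 0^+$. No subsequences, no minimality of $n_k$, no verification that the shifted distances also converge to $\delta$. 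The point is that, unlike the Banach case, the Kannan bound on $d(Tx,Ty)$ involves only consecutive distances $d(x,Tx)$, $d(y,Ty)$ and not $d(x,y)$ itself, so once consecutive distances vanish the Cauchy property is immediate.
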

\begin{proof}
In light of (\ref{eq:b}), for $n=1,\,2,\,3,\,\ldots$, 
\begin{align*}
&d(x_{n+1},\, x_{n})\\
&=d(Tx_{n},\, Tx_{n-1})\\
&\leq\frac{1-\varepsilon}{2}\left\{ d(x_{n+1},\, x_{n})+d(x_{n},\, x_{n-1})\right\}\\ &+\Lambda\varepsilon^{\alpha}\psi(\varepsilon)\left[1+\left\Vert x_{n+1}\right\Vert +\left\Vert x_{n}\right\Vert +\left\Vert x_{n}\right\Vert +\left\Vert x_{n-1}\right\Vert \right]^{\beta}\\
&\leq\frac{1-\varepsilon}{2}\left\{ d(x_{n+1},\, x_{n})+d(x_{n},\, x_{n-1})\right\} +C\varepsilon^{\alpha}\psi(\varepsilon)
\end{align*}
for $C=sup_{j\in\mathbb{N}}\Lambda(1+4c_{j})^{\beta}<\infty$ (on account
of Lemma \ref{lem:-is-bounded.}). But then, $\forall\varepsilon\in(0,\,1],$
\begin{align}
&d(x_{n+1},\, x_{n}) \nonumber \\
& \leq\frac{1-\varepsilon}{1+\varepsilon}d(x_{n},\, x_{n-1})+\frac{2C\varepsilon^{\alpha}}{1+\varepsilon}\psi(\varepsilon)\nonumber \\
& \vdots\nonumber \\
& \leq\cdots\nonumber \\
& \vdots\nonumber \\
& \leq\left(\frac{1-\varepsilon}{1+\varepsilon}\right)^{n}d(x_{1},\, x_{0})\nonumber \\
& +\frac{2C\varepsilon^{\alpha}}{1+\varepsilon}\psi(\varepsilon)\left[1+\frac{1-\varepsilon}{1+\varepsilon}+\cdots+\left(\frac{1-\varepsilon}{1+\varepsilon}\right)^{n-1}\right]\nonumber \\
& \leq k^{n}d(x_{1},\, x_{0})\nonumber \\
& +\frac{2C\varepsilon^{\alpha}}{1+\varepsilon}\psi(\varepsilon)(1+k+\cdots+k^{n-1}) & \textrm{[letting }k=\frac{1-\varepsilon}{1+\varepsilon}\geq0\textrm{]}\nonumber \\
& \le k^{n}d(x_{1},\, x_{0})\nonumber \\
& +\frac{2C\varepsilon^{\alpha}}{1+\varepsilon}\psi(\varepsilon)(1+k+\cdots+k^{n-1}+\cdots) & \textrm{[because }k\geq0 ]\nonumber \\
& \le k^{n}d(x_{1},\, x_{0})\nonumber \\
& +\frac{2C\varepsilon^{\alpha}}{1+\varepsilon}\psi(\varepsilon)\frac{1}{1-k}\nonumber \\
& =k^{n}d(x_{1},\, x_{0})+C\varepsilon^{\alpha-1}\psi(\varepsilon) & \textrm{[putting }k=\frac{1-\varepsilon}{1+\varepsilon}]\label{eq:z}
\end{align}
for all $n\in\mathbb{N}$. 

At this point we note that if $\varepsilon\in(0,\,1],$ then $k<1.$
Therefore, taking progressively lower values of $\varepsilon$ that
approach zero but never quite reach it, the R.H.S. of (\ref{eq:z})
can be made as small as one wishes it to be as $n\rightarrow\infty.$
Indeed, since $C\varepsilon^{\alpha-1}\psi(\varepsilon)\rightarrow0$
as $\varepsilon\rightarrow0+,$ for an arbitrary $\eta>0,$ $\exists\varepsilon=\varepsilon(\eta)>0$
such that $C\varepsilon^{\alpha-1}\psi(\varepsilon)<\frac{\eta}{2}.$
Again, for this $\varepsilon$ ($=\varepsilon(\eta)$), $\exists N\in\mathbb{N}$
such that $k^{n}d(x_{1},\, x_{0})<\frac{\eta}{2}$ $\forall n\geq N$
because $k^{n}d(x_{1},\, x_{0})\rightarrow0$ as $n\rightarrow\infty.$
Together that gives us 
\[
k^{n}d(x_{1},\, x_{0})+C\varepsilon^{\alpha-1}\psi(\varepsilon)<\frac{\eta}{2}+\frac{\eta}{2}=\eta,\forall n\geq N.
\]
In other words, 
\begin{equation}
d(x_{n},\, x_{n+1})\rightarrow0\,\textrm{as}\, n\rightarrow\infty,\,\varepsilon\rightarrow0+.\label{eq:e}
\end{equation}

Hence, from (\ref{eq:b}), using the same $C=sup_{j\in\mathbb{N}}\Lambda(1+4c_{j})^{\beta}$, and
letting $n\rightarrow\infty$, $\varepsilon\rightarrow0+,$ 
\begin{align*}
&d(x_{n},\, x_{n+p})\\
&=d(Tx_{n-1},\, Tx_{n+p-1})\\
& \leq\frac{1-\varepsilon}{2}\left\{ d(x_{n-1},\, x_{n})+d(x_{n+p-1},\, x_{n+p})\right\} +C\varepsilon^{\alpha}\psi(\varepsilon)\\
 & \rightarrow0 &\textrm{[using (\ref{eq:e})] }
\end{align*}
uniformly over $p=1,\,2,\,\ldots,$ which basically assures us that
$\{x_{n}\}$ is Cauchy. 
\end{proof}
Equipped with (\ref{lem:-is-Cauchy.}) and taking into note the
completeness of $X$, we can now safely guarantee the existence of
some $x^{*}\in X$ to which $\{x_{n}\}$ converges. 

Finally, all that remains to show is that:

\subsubsection{$x^{*}$ is a fixed point for $T.$}
For this we observe that, $\forall n\in\mathbb{N},$ 
\begin{align}
&d(Tx^{*},\, x^{*})\nonumber \\
& \leq d(Tx^{*},\, x_{n+1})+d(x_{n+1},\, x^{*})\nonumber \\
& =d(Tx^{*},\, Tx_{n})+d(x_{n+1},\, x^{*}) & \textrm{}\nonumber \\
& \leq\frac{1}{2}\left\{ d(Tx^{*},\, x^{*})+d(Tx_{n},\, x_{n})\right\}+d(x_{n+1},\, x^{*})\nonumber \\
&\textrm{[using (\ref{eq:b}) with } \varepsilon=0 \textrm{ again]}\nonumber \\
&\implies\frac{1}{2}d(Tx^{*},\, x^{*})\leq\frac{1}{2}d(x_{n},\, x_{n+1})+d(x_{n+1},\, x^{*})\label{eq:f}
\end{align}
 As $n\rightarrow\infty$ (and $\varepsilon\rightarrow0+$), we know
that: 
\begin{align*}
&d(x_{n},\, x_{n+1})\rightarrow0 &\textrm{[from (\ref{eq:e})];}\\
&d(x_{n+1},\, x^{*})\rightarrow0 &\textrm{[since }x_{n}\rightarrow x^{*}].
\end{align*} 
So (\ref{eq:f}) actually gives us that 
\begin{align*}
d(x^{*},\, Tx^{*}) & \leq0\\
\implies Tx^{*} & =x^{*},
\end{align*}
which is the required result.

\section{Comparison with Kannan's Original Result}

The requirements of Theorem \ref{thm:If-the-inequality} are indeed
weaker than those of Kannan's theorem. To see that, let us start from
(\ref{eq:a}) with $\lambda\in(0,\,1)$ (barring the trivial case where
$\lambda=0$). 

We have, $\forall\varepsilon\in[0,\,1],$ 
\begin{align}
&d(Tx,\, Ty)\nonumber \\
& \leq\frac{\lambda}{2}\left\{ d(x,\, Tx)+d(y,\, Ty)\right\} \nonumber \\
& \leq\frac{1-\varepsilon}{2}\left\{ d(x,\, Tx)+d(y,\, Ty)\right\} +\frac{\lambda+\varepsilon-1}{2}\left\{ d(x,\, Tx)+d(y,\, Ty)\right\} \nonumber \\
& =\frac{1-\varepsilon}{2}\left\{ d(x,\, Tx)+d(y,\, Ty)\right\} \nonumber \\
& +\frac{\lambda}{2}\left(1+\frac{\varepsilon-1}{\lambda}\right)\left\{ d(x,\, Tx)+d(y,\, Ty)\right\} \nonumber \\
& \leq\frac{1-\varepsilon}{2}\left\{ d(x,\, Tx)+d(y,\, Ty)\right\} +\frac{\lambda}{2}(1+\overline{\varepsilon-1})^{\frac{1}{\lambda}}\left\{ d(x,\, Tx)+d(y,\, Ty)\right\} \nonumber \\
& \textrm{[using Bernoulli's Inequality, since }\,\varepsilon-1\geq-1\,\textrm{\,\&\,\,}\frac{1}{\lambda}>1\textrm{]}\nonumber \\
& \leq\frac{1-\varepsilon}{2}\{d(x,\, Tx)+d(y,\, Ty)\}\nonumber \\
&+\frac{\lambda}{2}\varepsilon^{\frac{1}{\lambda}}\left[1+\left\{ d(x,\, x_{0})+d(x_{0},\, Tx)\right\} +\left\{ d(y,\, x_{0})+d(x_{0},\, Ty)\right\} \right]\nonumber \\
& =\frac{1-\varepsilon}{2}\{d(x,\, Tx)+d(y,\, Ty)\}+\frac{\lambda}{2}\varepsilon^{1+\gamma}[1+\left\Vert x\right\Vert +\left\Vert Tx\right\Vert +\left\Vert y\right\Vert +\left\Vert Ty\right\Vert ]\nonumber \\
& \textrm{[taking }1<\frac{1}{\lambda}=1+\gamma\,\textrm{ for some }\gamma>0]\nonumber \\
& \leq\frac{1-\varepsilon}{2}\{d(x,\, Tx)+d(y,\, Ty)\}+\frac{\lambda}{2}\varepsilon\varepsilon^{\gamma}[1+\left\Vert x\right\Vert +\left\Vert Tx\right\Vert +\left\Vert y\right\Vert +\left\Vert Ty\right\Vert ].\label{eq:g}
\end{align}
Then a quick comparison between (\ref{eq:b}) and (\ref{eq:g}) with
$\psi(\varepsilon)=\varepsilon^{\gamma}$ ($\gamma>0$) provides us
with what we need.



\end{document}